\definecolor{linkblue}{RGB}{1,1,190}
\definecolor{citered}{RGB}{190,1,1}  
\newtheorem{theorem}{Theorem}
\newtheorem{lemma}[theorem]{Lemma}
\newtheorem{corollary}[theorem]{Corollary}
\newcommand{\N}{\mathbb N}
\newcommand{\Z}{\mathbb Z}
\DeclareMathOperator{\fin}{fin}
\def\P{\ensuremath\mathcal{P}}
\def\Pf{\ensuremath\mathcal{P}_{\text{fin},1}}
\def\O{\ensuremath\mathcal{O}}
\begin{document}
\title[A counterexample to an isomorphism problem for power monoids]{A counterexample to an isomorphism problem \\ for power monoids}
\author{Balint Rago}
\address{University of Graz, NAWI Graz, Department of Mathematics and Scientific Computing, Heinrichstraße 36,
8010 Graz, Austria}

\thanks{This work was supported by the FWF (projects W1230 and  10.55776/DOC-183-N)}

\email{balint.rago@uni-graz.at}

\subjclass[2020]{20M14}

\keywords{isomorphism problems, power monoid}

\begin{abstract}
    Let $H$ be a (multiplicatively written) monoid. The family $\Pf(H)$ of finite subsets of $H$ containing the identity element is itself a monoid when endowed with setwise multiplication induced by $H$. Tringali and Yan proved that two monoids $H_1$ and $H_2$ contained in a special class of commutative, cancellative monoids are isomorphic if and only if $\Pf(H_1)$ and $\Pf(H_2)$ are. Moreover, they raised the question whether the same holds in the general setting of cancellative monoids. We show that if $H_1$ and $H_2$ are (commutative) valuation monoids with trivial unit groups and isomorphic quotient groups, then $\Pf(H_1)\simeq\Pf(H_2)$. This provides a negative answer to Tringali and Yans question already within the class of valuation submonoids of the additive group $\mathbb{Z}^2$. 
\end{abstract}

\maketitle

\section{Introduction}

Let $S$ be a (multiplicatively written) semigroup. We denote by $\P(S)$ the \textit{large power semigroup} of $S$, i.e.\ the family of all non-empty subsets of $S$, endowed with the binary operation of setwise multiplication \[(X,Y)\mapsto \{xy:x\in X,y\in Y\}.\] Moreover, for a monoid $H$ with identity $1$, we denote by $\Pf(H)$ the set of finite subsets of $H$ that contain $1$; it is a submonoid of $\P(H)$ with identity $\{1\}$, called the \textit{reduced finitary power monoid} of $H$. 

The systematic study of power semigroups began in the 1960s and was initiated by Tamura and Shafer. A central question that arose from their work, called the \emph{isomorphism problem for power semigroups}, is whether, for semigroups $S$ and $T$ in a certain class $\O$, an isomorphism between $\P(S)$ and $\P(T)$ implies that $S$ and $T$ are isomorphic. Although this was answered in the negative by Mogiljanskaja \cite{Mo73} for the class of all semigroups, several other classes have been found for which the answer is positive, see for example \cite{Ga-Zh14,Go-Is84,Sh67, Sh-Ta67,Tr25}. More recently, Tringali pushed forward the investigation of the arithmetic of reduced finitary power monoids \cite{An-Tr21,Co-Tr25,Fa-Tr18,Tr22}. 
In \cite{Bi-Ge25}, Bienvenu and Geroldinger conjectured that an analogue of the isomorphism problem holds for the reduced finitary power monoids of numerical monoids. This was soon resolved by Tringali and Yan \cite{Tr-Ya25}, who more generally proved that if $H_1$ and $H_2$ are submonoids of the non-negative rational numbers under addition, then $H_1$ and $H_2$ are isomorphic if and only if $\Pf(H_1)$ and $\Pf(H_2)$ are. Moreover, the authors showed that this does not hold true for arbitrary monoids \cite[Examples 1.2]{Tr-Ya25}, by constructing counterexamples involving non-cancellative monoids, leaving the question open whether the isomorphism problem for $\Pf(H)$ holds true in the ``well-behaved" setting of cancellative monoids. 

In the next section, we provide a negative answer to this question by first showing that the reduced finitary power monoids of two reduced valuation monoids with isomorphic quotient groups are always isomorphic. Using this result, we then present non-isomorphic submonoids of $(\mathbb{Z}^2,+)$ with isomorphic reduced finitary power monoids.

\section{The example}

We denote by $\N$ the set of positive integers and by $\N_0$ the set of non-negative integers. Let $G$ be an abelian group, $X\subseteq G$ a subset and $a\in G$. We write \[aX:=\{ax:x\in X\}\] and \[X^{-1}:=\{x^{-1}:x\in X\}.\]  

We say that a commutative monoid $H$ is \textit{cancellative} if for all $a,b,c\in H$ with $ac=bc$, it holds that $a=b$. For a commutative, cancellative monoid $H$, we denote by $\mathsf{q}(H)$ the \textit{quotient group} of $H$, that is, the unique abelian group up to isomorphism with the property that any abelian group containing an isomorphic image of $H$, also contains an isomorphic image of $\mathsf{q}(H)$. We have \[\mathsf{q}(H)=\{ab^{-1}:a,b\in H\}\] and throughout this paper, we will always assume that $H\subseteq \mathsf{q}(H)$.
We say that a commutative, cancellative monoid $H$ is a \textit{valuation monoid} if for every $x\in \mathsf{q}(H)$, we have $x\in H$ or $x^{-1}\in H$. Moreover, $H$ is called \textit{reduced} if the only invertible element of $H$ is the identity $1$. Lastly, we call a non-invertible element $a\in H$ \textit{irreducible} if it cannot be written as a non-trivial product of non-invertible elements of $H$.

\begin{lemma}\label{lemma}
    Let $H$ be a reduced valuation monoid with quotient group $G=\mathsf{q}(H)$ and let $X\in \P_{\fin,1}(G)$. There is a unique $a\in G$ such that $aX\in\P_{\fin,1}(H)$.
\end{lemma}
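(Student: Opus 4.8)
The plan is to exploit the fact that a reduced valuation monoid induces a translation-invariant total order on its quotient group, and then to pin down the element $a$ as the inverse of the $\preceq$-least element of $X$.

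First I would introduce a relation $\preceq$ on $G$ by declaring $x\preceq y$ if and only if $yx^{-1}\in H$. Reflexivity is immediate from $1\in H$, and transitivity follows from $H$ being closed under multiplication. Antisymmetry uses that $H$ is reduced: if both $yx^{-1}$ and $xy^{-1}=(yx^{-1})^{-1}$ lie in $H$, then $yx^{-1}$ is a unit of $H$, hence $yx^{-1}=1$ and $x=y$. Totality is exactly the valuation hypothesis applied to the element $yx^{-1}\in G$. Finally, $\preceq$ is compatible with the group law, that is, $x\preceq y$ implies $xz\preceq yz$ for every $z\in G$, since $(yz)(xz)^{-1}=yx^{-1}$; in particular $x\in H$ if and only if $1\preceq x$, and more generally $aX\subseteq H$ if and only if $a^{-1}\preceq x$ for all $x\in X$ (translate by $a^{-1}$).

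Since $X$ is a nonempty finite subset of the totally ordered set $(G,\preceq)$, it has a $\preceq$-least element $m:=\min X$. I claim that $a:=m^{-1}$ is the unique element of $G$ with $aX\in\P_{\fin,1}(H)$. For existence: $m^{-1}X$ is finite, it contains $1=m^{-1}m$, and for every $x\in X$ we have $m\preceq x$, hence $1\preceq m^{-1}x$ after translating by $m^{-1}$, so $m^{-1}x\in H$; thus $m^{-1}X\in\P_{\fin,1}(H)$. For uniqueness, let $a\in G$ with $aX\in\P_{\fin,1}(H)$. From $1\in aX$ we obtain $a^{-1}\in X$, and from $aX\subseteq H$ we obtain $a^{-1}\preceq x$ for all $x\in X$, so $a^{-1}$ is a lower bound of $X$ that itself belongs to $X$; by antisymmetry $a^{-1}=m$, i.e.\ $a=m^{-1}$.

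I do not anticipate a genuine obstacle here; the only points requiring care are the order axioms for $\preceq$ — antisymmetry really does use that $H$ is reduced, and totality really does use the valuation property — but each is a one-line verification, as is the passage between the conditions ``$aX\subseteq H$'' and ``$a^{-1}$ is a lower bound of $X$''.
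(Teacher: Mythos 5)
Your proof is correct, but it takes a different route from the paper's. For existence, the paper argues by induction on $|X|$: remove an element $x$ from $X$, translate the rest into $H$ by induction, and if the translate of $x$ fails to land in $H$, use the valuation property to show that translating by $x^{-1}$ instead works. You instead make explicit the structure underlying that argument: the relation $x\preceq y \iff yx^{-1}\in H$ is a translation-invariant total order on $G$ precisely because $H$ is a reduced valuation monoid (antisymmetry $=$ reduced, totality $=$ valuation), and then $a$ is forced to be the inverse of $\min_\preceq X$, which exists because $X$ is finite and nonempty. Your uniqueness argument is essentially the paper's in disguise -- the paper notes that $a_1a_2^{-1}$ and $a_2a_1^{-1}$ would both lie in $H$, contradicting reducedness, which is exactly the antisymmetry of your order. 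All the verifications you list (the order axioms, translation invariance, and the equivalence between $aX\subseteq H$ and $a^{-1}$ being a lower bound of $X$) do go through as one-liners. The trade-off: your argument is more conceptual and identifies the ``totally ordered group'' picture that also illuminates Theorem~\ref{prop} (the isomorphism $f$ just normalizes each set by its minimum), at the cost of introducing an auxiliary structure; the paper's induction is shorter on the page and entirely self-contained.
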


\begin{proof}
    Suppose first that there are distinct $a_1,a_2\in G$ such that $a_1X,a_2X\in\P_{\fin,1}(H)$. Then $a_1^{-1},a_2^{-1}\in X$, which implies that $a_1a_2^{-1}\in H$ and $a_2 a_1^{-1}\in H$, a contradiction to $H$ being reduced. Hence there is at most one $a\in G$ such that $aX\in\P_{\fin,1}(H)$. \\

    To show the existence of such an $a$, we proceed by induction on the cardinality of $X$. If $|X|=1$, we have $X=\{1\}$ and the assertion is clearly true. Suppose that $|X|\geq 2$, let $1\neq x\in X$ and set $Y:=X\setminus \{x\}$. By the induction hypothesis, there is $a\in G$ such that $aY\in \P_{\fin,1}(H)$. If $ax\in H$, we are done. Otherwise, we have $aY\subseteq H$, but $ax\not\in H$, whence \[
        (xY^{-1})\cap H=(ax(aY)^{-1})\cap H =\emptyset.
    \] However, since $H$ is a valuation monoid, this implies that $x^{-1}Y\subseteq H$ and consequently $x^{-1}X\in \P_{\fin,1}(H)$.
\end{proof}

\begin{theorem}\label{prop}
    Let $H_1,H_2$ be reduced valuation monoids with $\mathsf{q}(H_1)\simeq\mathsf{q}(H_2)$. Then $\P_{\fin,1}(H_1)\simeq \P_{\fin,1}(H_2)$.
\end{theorem}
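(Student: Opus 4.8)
The plan is to show that, for a reduced valuation monoid $H$ with quotient group $G=\mathsf{q}(H)$, the monoid $\P_{\fin,1}(H)$ depends on $G$ alone, by realizing it as a canonical system of representatives inside the finite subsets of $G$. Concretely, I would fix a group isomorphism $\varphi\DP\mathsf{q}(H_1)\to\mathsf{q}(H_2)$, note that it induces a bijection $X\mapsto\varphi(X)$ from $\P_{\fin,1}(\mathsf{q}(H_1))$ to $\P_{\fin,1}(\mathsf{q}(H_2))$ which is compatible with setwise multiplication (since $\varphi(XY)=\varphi(X)\varphi(Y)$), and then define a map $\Phi\DP\P_{\fin,1}(H_1)\to\P_{\fin,1}(H_2)$ as follows: given $X\in\P_{\fin,1}(H_1)$ we have $\varphi(X)\in\P_{\fin,1}(\mathsf{q}(H_2))$, and by Lemma \ref{lemma} there is a unique $a_X\in\mathsf{q}(H_2)$ with $a_X\varphi(X)\in\P_{\fin,1}(H_2)$; put $\Phi(X):=a_X\varphi(X)$. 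The whole theorem then reduces to checking that $\Phi$ is an isomorphism.

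For the homomorphism property, $\Phi(\{1\})=\{1\}$ because $\{1\}\in\P_{\fin,1}(H_2)$ forces $a_{\{1\}}=1$ by the uniqueness clause of Lemma \ref{lemma}. For multiplicativity, take $X,Y\in\P_{\fin,1}(H_1)$; since $\P_{\fin,1}(H_2)$ is a submonoid of $\P(\mathsf{q}(H_2))$, the set $\Phi(X)\Phi(Y)=(a_X\varphi(X))(a_Y\varphi(Y))=a_Xa_Y\,\varphi(XY)$ lies in $\P_{\fin,1}(H_2)$; being a translate of $\varphi(XY)$ contained in $\P_{\fin,1}(H_2)$, the uniqueness part of Lemma \ref{lemma} gives $a_Xa_Y=a_{XY}$, hence $\Phi(X)\Phi(Y)=a_{XY}\varphi(XY)=\Phi(XY)$.

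For bijectivity, I would define $\Psi\DP\P_{\fin,1}(H_2)\to\P_{\fin,1}(H_1)$ in exactly the same way using $\varphi^{-1}$, and show $\Psi\circ\Phi=\mathrm{id}$ and $\Phi\circ\Psi=\mathrm{id}$. For the first: if $X\in\P_{\fin,1}(H_1)$, then $\varphi^{-1}(\Phi(X))=\varphi^{-1}(a_X)\,X$, so $\Psi(\Phi(X))$ equals $b\,\varphi^{-1}(a_X)X$ for the unique $b\in\mathsf{q}(H_1)$ making this lie in $\P_{\fin,1}(H_1)$; but $X$ itself is a translate of $X$ lying in $\P_{\fin,1}(H_1)$, so uniqueness in Lemma \ref{lemma} (applied to the set $X$) forces $b\,\varphi^{-1}(a_X)=1$ and thus $\Psi(\Phi(X))=X$. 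The identity $\Phi\circ\Psi=\mathrm{id}$ follows symmetrically. Hence $\Phi$ is a bijective monoid homomorphism, i.e.\ $\P_{\fin,1}(H_1)\simeq\P_{\fin,1}(H_2)$.

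I do not expect a genuine obstacle here: all the substance sits in Lemma \ref{lemma}, and in particular in its uniqueness half, which is exactly what makes $\Phi$ well defined, multiplicative, and invertible; the rest is bookkeeping. If a cleaner presentation is preferred, I would instead record the conceptual statement behind the computation: the group $G$ acts on the monoid $\mathcal{F}(G)$ of non-empty finite subsets of $G$ by translations, the orbit set $\mathcal{F}(G)/G$ carries a well-defined monoid structure via $[X]\cdot[Y]:=[XY]$, and Lemma \ref{lemma} says precisely that the natural map $\P_{\fin,1}(H)\to\mathcal{F}(G)/G$ is an isomorphism of monoids; the theorem is then immediate from $\mathcal{F}(\mathsf{q}(H_1))/\mathsf{q}(H_1)\simeq\mathcal{F}(\mathsf{q}(H_2))/\mathsf{q}(H_2)$.
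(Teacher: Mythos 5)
Your proof is correct and follows essentially the same route as the paper's: both define the map by sending each set to its unique translate lying in $\P_{\fin,1}(H_2)$ (as provided by Lemma~\ref{lemma}) and both invoke the uniqueness clause of that lemma for well-definedness, multiplicativity, and bijectivity. The only cosmetic difference is that the paper first replaces $H_2$ by an isomorphic copy inside $\mathsf{q}(H_1)$ so that no explicit isomorphism $\varphi$ needs to be carried through the computation.
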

\begin{proof}
   Set $G:=\mathsf{q}(H_1)$. Without loss of generality, we may replace $H_2$ with any isomorphic copy of itself. Since $G\simeq\mathsf{q}(H_2)$, we can thus assume that $H_2\subseteq G$ and $\mathsf{q}(H_2)=G$. We define an isomorphism $
    f:\P_{\fin,1}(H_1)\to \P_{\fin,1}(H_2)$ in the following way. 

    If $X\in \P_{\fin,1}(H_1)$, then $f(X)=aX$, where $a\in G$ is such that $aX\in \P_{\fin,1}(H_2)$. By the previous lemma, $f$ is a well-defined function. To show that $f$ is bijective, let $Y\in\Pf(H_2)$. Then, by Lemma \ref{lemma}, there is a unique $b\in G$ such that $bY\in\Pf(H_1)$ and we clearly have $f(bY)=b^{-1}(bY)=Y$. Hence $f$ is surjective. Moreover, if $f(X)=aX=Y$ for some $X\in\Pf(H_1)$ and $a\in G$, then $a^{-1}Y=X\in\Pf(H_1)$. Thus, by the uniqueness of $b$, we obtain $b=a^{-1}$ and $X=bY$, which proves the injectivity of $f$. \\
    
    Let now $X,Y\in\P_{\fin,1}(H_1)$. Then $f(X)=aX$, $f(Y)=bY$ and $f(XY)=cXY$ for some $a,b,c\in G$. Since \[
    f(X)f(Y)=abXY\in\Pf(H_2),
    \] it follows by the uniqueness of $c$ that $ab=c$, which proves that $f$ is an isomorphism.
\end{proof}

\begin{corollary}
    There are non-isomorphic, reduced valuation monoids $H_1$ and $H_2$ such that $\P_{\fin,1}(H_1)\simeq \P_{\fin,1}(H_2)$.
\end{corollary}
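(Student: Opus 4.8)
The plan is to reduce to Theorem \ref{prop}: it is enough to exhibit two reduced valuation monoids $H_1$ and $H_2$ that are \emph{not} isomorphic as monoids but satisfy $\mathsf{q}(H_1)\simeq\mathsf{q}(H_2)$, since then the theorem gives $\Pf(H_1)\simeq\Pf(H_2)$. I would take both quotient groups to be $(\Z^2,+)$ and realize $H_1,H_2$ as the monoids of non-negative elements for two genuinely different compatible total orders on $\Z^2$. Concretely, let $H_1:=\{(a,b)\in\Z^2:a>0\}\cup(\{0\}\times\N_0)$ be the non-negative cone of the lexicographic order, and fix an irrational $\alpha>0$ (say $\alpha=\sqrt2$) and let $H_2:=\{(a,b)\in\Z^2:a\alpha+b>0\}\cup\{(0,0)\}$ be the non-negative cone of the archimedean order induced by $\alpha$.

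First I would record the routine verifications. Each defining condition is a compatible total order on $\Z^2$ — for $H_2$ one uses that $\alpha$ is irrational, so that $a\alpha+b=0$ with $a,b\in\Z$ forces $(a,b)=(0,0)$ — hence each $H_i$ is a commutative cancellative monoid with $H_i\cap H_i^{-1}=\{(0,0)\}$, so $H_i$ is reduced; and for every $x\in\Z^2$ we have $x\in H_i$ or $-x\in H_i$, so $H_i$ is a valuation monoid with $\mathsf{q}(H_i)=\Z^2$. These $H_1,H_2$ are thus submonoids of $(\Z^2,+)$ as promised in the abstract.

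The substantive step is to separate $H_1$ from $H_2$ by a monoid invariant, and for this I would use the set of irreducible elements, which any monoid isomorphism preserves. Here the point is that $\Z+\Z\alpha$ is dense in $\mathbb{R}$: given $(a,b)\in H_2\setminus\{(0,0)\}$ one can pick $(c,d)\in\Z^2$ with $0<c\alpha+d<a\alpha+b$, and then $(c,d)$ and $(a-c,b-d)$ both lie in $H_2\setminus\{(0,0)\}$ and sum to $(a,b)$; hence $H_2$ has \emph{no} irreducible elements. By contrast $(0,1)\in H_1$ is irreducible: a factorization $(0,1)=(c,d)+(-c,1-d)$ in $H_1$ forces $c=0$, and then $d\in\{0,1\}$, so one of the two summands is $(0,0)$. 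Therefore $H_1\not\simeq H_2$, and combined with Theorem \ref{prop} this yields the corollary.

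The only genuine obstacle is the absence of atoms in $H_2$, which rests on the density of $\Z\alpha+\Z$ in $\mathbb{R}$; the remaining checks are a matter of unwinding the definitions of valuation monoid, reducedness, and quotient group.
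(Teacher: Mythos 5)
Your proposal is correct and follows essentially the same route as the paper: the paper likewise takes for $H_1$ a lexicographic-type cone in $\Z^2$ (namely $(\Z\times\N)\cup(\N_0\times\{0\})$, which has the irreducible element $(1,0)$) and for $H_2$ an irrational half-plane $\{(x,y)\in\Z^2: y\le \alpha x\}$, then separates the two by the existence of irreducible elements and invokes Theorem \ref{prop}. The only difference is that you prove the absence of irreducibles in $H_2$ directly from the density of $\Z\alpha+\Z$ in $\mathbb{R}$, whereas the paper cites this from Lettl's classification of atoms of root-closed submonoids of $\Z^2$; your self-contained argument is fine.
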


\begin{proof}
    We define the (additively written) monoids \[H_1:=(\Z\times\mathbb{N})\cup(\mathbb{N}_0\times\{0\})\] and \[H_2:=\{(x,y)\in\Z^2:y\leq \alpha x\},\] where $0<\alpha\in\mathbb{R}\setminus\mathbb{Q}$. 
    It is easy to verify that both $H_1$ and $H_2$ are reduced valuation monoids with $\mathsf{q}(H_1)\simeq\mathsf{q}(H_2)\simeq(\Z^2,+)$. Moreover, the element $(1,0)$ is irreducible in $H_1$, whereas by item B4 of \cite[Theorem 10]{Le22}, $H_2$ does not contain any irreducible elements. Hence $H_1$ and $H_2$ are not isomorphic, and the assertion follows from Theorem \ref{prop}.
\end{proof}

\end{document}